\newtheorem{theorem}{Theorem}[section]
\newtheorem{lemma}[theorem]{Lemma}
\newtheorem{remark}[theorem]{Remark}
\newtheorem*{remarks}{Remarks}
\newcommand{\Sp}{{\mathord{\mathbb S}}}
\def\ii{\'\i}
\begin{document}
\title[]{\textbf{Monotonicity of the period of a non linear oscillator}}

\author[Benguria]{Rafael~D.~Benguria$^1$}

\author[Depassier]{M.~Cristina~Depassier$^2$}

\author[Loss]{Michael~Loss$^3$}

\address{$^1$ Instituto  de F\'\i sica, Pontificia Universidad Cat\' olica de Chile,}
\email{{rbenguri@fis.puc.cl}}

\address{$^2$ Instituto  de F\'\i sica, Pontificia Universidad Cat\' olica de Chile,}
\email{{mcdepass@fis.puc.cl}}

%\thanks{The work of RB has been supported by Fondecyt (Chile) Project \# 112--0836 and by the Nucleo Milenio en  ``F\ii sica Matem\'atica'', RC--12--0002 (ICM, Chile)}

\address{$^3$ School of Mathematics, Georgia Institute of Technology}
\email{{loss@math.gatech.edu}}

\begin{abstract} We revisit the problem of monotonicity of the period function
for the differential equation $u''-u+u^p=0$ and give a simple proof of recent
results of Miyamoto and Yagasaki. 

\end{abstract}

\maketitle

\section{Introduction} \label{intro}

The differential equation
\begin{equation}\label{eq:prob}
-\frac{d^2 u}{d x^2} + \frac{\lambda}{p-1} (v - v^p) = 0 \ ,
\end{equation}
despite its simple form, plays an interesting role in the theory of ordinary
differential equations and in the calculus of variations. When considered on the whole line
it is  the Euler - Lagrange equation for the sharp constant for a one dimensional Gagliardo Nirenberg inequality and can be solved explicitly. (For this and related results see \cite{MR3263963}). When considered on the circle, i.e.,
an interval with periodic boundary conditions, the equation exhibits interesting bifurcation
behavior. More precisely, fix $p >1$ and consider the problem of minimizing 
\begin{equation}\label{eq:functional}
\frac{\int_{\Sp^1} |u'(x)|^2 dx + \frac{\lambda}{p-1} \int_{\Sp^1} |u(x)|^2 dx}{\left(\int_{\Sp^1} |u(x)|^{p+1} dx \right)^{2/(p+1)}} \ .
\end{equation}
It is easily seen that a minimizer exists and that it satisfies an equation of type \eqref{eq:prob}.
It can be shown that for $ \lambda \le 1$,  $u = {\rm constant}$ is the only solution,
while for $\lambda >1$ there is an additional non-constant solution (see, e.g., \cite{MR3229793}). Let us mention that the same problem on the $d$-dimensional sphere was treated in \cite{MR1134481, MR1213110}, where it is shown that the constant solution is the only solution for $\lambda \le d$. For $\lambda > d$ the minimizing solution is not constant  and not much is known about it.

One way to understand the result in one dimension is through the period of the solutions.
If $\lambda$ is small one expects that the period of any non constant solution is large compared to the circumference of the circle and hence the circle can only support the constant
solution. If $\lambda$ increases one expects that the period decreases and hence at some
point a second solution bifurcates that is periodic and non-constant but fully supported by the circle. This new solution is then the minimizer of the functional.

Such problems can be very effectively studied using the period function. This idea,
to our knowledge, goes back to a paper by Smoller and Wasserman \cite{MR607786}
who use the period function to derive the complete bifurcation diagram
for such type of equations with cubic non-linearities. Schaaf
\cite{MR814016} proved the monotonicity of the period as a function of the energy for a class of Hamiltonian systems. This work was extended by Rothe \cite{MR1199531}. 
Chow and Wang \cite{MR833153} developed alternative formulas
that allowed them to prove the monotonicity of the period function for equations of the
type
$$
u''+e^u-1=0 \ .
$$
Some of the results of Chow and Wang were also obtained by Chicone \cite{MR903390}
using a different approach. For further references on the uses of the period function
the reader may consult \cite{MR3072678}.

It is somewhat surprising that, despite it ubiquity, the monotonicity of the period function for problem \eqref{eq:prob} in full generality was only established recently.  Chicone's work was the starting point for a thorough investigation of \eqref{eq:prob} by Miyamoto and Yagasaki \cite{MR2990036}
who proved  the monotonicity of the period function of \eqref{eq:prob} for integer $p$.
This result was then later generalized by Yagasaki \cite{MR3072678} to all values of $p >2$.
The approach in both of these works is to verify the Chicone Criterion which,
while non-trivial, is not too difficult for the case when $p$ is an integer.
The problem is, however, surprisingly difficult when $p$
is not an integer.  Yagasaki first treats the case where $p$ is rational and then extends 
the result to the general case by continuity.  Yagasaki's treatment is a real tour de force
and involves substantial amount of ingenuity and computations.
A consequence of Yagasaki's result and also established in \cite{MR3072678} is a complete
bifurcation diagram for the system \eqref{eq:prob} with Neumann Boundary conditions $v'(\pm \pi) = 0$.

In view of the complexity of the arguments in \cite{MR3072678} it is our aim to revisit this
problem and give proofs that are, in our view, more elementary than the ones given in
\cite{MR3072678}. Once more we start with Chicone's criterion which amounts to
check the convexity of a particular function. With simple changes of variables, this problem
is then recast in terms of solutions of differential equations that can 
be understood via maximum principles.

\section{The period function and Chicone's Criterion}
By rescaling the solution of \eqref{eq:prob}
$$
u(x) \to u\left( \sqrt{\frac{\lambda}{p-1}} x\right)
$$
we may assume that $u$ is a solution of the equation
\begin{equation} \label{eq:prob2}
u'' +u-u^p=0
\end{equation}
with periodic boundary conditions on the interval $[0, T]$
where
\begin{equation} \label{eq:interval}
T = 2\pi  \sqrt{\frac{\lambda}{p-1}} \ .
\end{equation}
Integrating this equation we get
\begin{equation}\label{eq:energy}
\frac{u'^2}{2}   +  V(u) = E
\end{equation}
where
\begin{equation} \label{eq:potential}
V(u) = \frac{u^{p+1}}{p+1}  - \frac{u^2}{2}  -\left( \frac{1}{p+1} -\frac{1}{2} \right)
\end{equation}
is the potential. Note that $V(u)$ has a minimum at $u=1$ and vanishes at that point.
The period function is given by
\begin{equation} \label{period}
T(E) = 2 \int_{u_-}^{u_+} \frac{1}{\sqrt{E-V(u)}} du
\end{equation}
where the values $u_{\pm}$ are determined by $V(u_\pm) = E$.

As in \cite{MR3072678} we start with Chicone's Criterion, which states that if for $u>0$ and $u \not= 1$
\begin{equation} \label{eq:chiccrit}
\left( \frac{V(u)}{V'(u)^2} \right)'' \ge 0
\end{equation}
then the function $T(E)$
is monotone increasing for $E$ in the interval $(0, ( \frac{1}{2} -\frac{1}{p+1}    ))$. Here, $u_\pm $ are the values
so that $V(u_\pm) = E$. The whole problem, originally solved in \cite{MR3072678}, is to verify
\eqref{eq:chiccrit} for the potential \eqref{eq:potential}.

\begin{remark} Near $u=1$ the potential is of the form
$$
V(u) = (p-1) (u-1)^2 +o((u-1)^2)
$$ 
and a straightforward computation shows that
$$
\lim_{E \to 0} T(E) =  \frac{2\pi}{\sqrt{p-1}} \ ,
$$
and strict monotonicity of $T(E)$ implies that
$$
T(E) >  \frac{2\pi}{\sqrt{p-1}} \ .
$$
Comparison with \eqref{eq:interval} implies  that a non-trivial periodic solution can only exist if $\lambda > 1$. 
\end{remark}

\noindent
\section{Proof of the monotonicity of the period function}
\begin{theorem}
The period function  \eqref{period} for the equation \eqref{eq:prob2} is monotonically increasing as a function of the energy $E$  for $0 < E < (p-1)/(2(p+1))$.
\end{theorem}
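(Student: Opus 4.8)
The plan is to verify Chicone's criterion, inequality (2.5), for the potential $V$ given in (2.4). By the statement, once we show
$$
\left( \frac{V(u)}{V'(u)^2} \right)'' \ge 0 \qquad \text{for } u>0,\ u\neq 1,
$$
the monotonicity of $T(E)$ follows on the stated energy interval. So the entire task reduces to this single convexity inequality for an explicit rational-type function built from $V(u)=\frac{u^{p+1}}{p+1}-\frac{u^2}{2}-\bigl(\frac{1}{p+1}-\frac12\bigr)$ and $V'(u)=u^p-u$.

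First I would introduce a change of variables to simplify the expression. Since $V'(u)=u(u^{p-1}-1)$ and the troublesome point is $u=1$, a natural substitution is $u=e^t$ or a power-type variable that linearizes the exponent $p-1$, turning the algebraic singularity at $u=1$ into something smooth. The goal of this step is to recast the quantity $V/V'^2$ so that its second derivative, rather than appearing as an unwieldy rational function of $u$ and $u^p$, becomes the solution of, or is controlled by, a simple ordinary differential equation. Following the stated philosophy of the paper, I would aim to write the relevant function as (or compare it to) something satisfying a second-order ODE whose sign structure can be read off by a maximum principle argument, so that the nonnegativity of the second derivative becomes a statement about a subsolution/supersolution rather than a brute-force expansion.

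The key computational steps, in order, would be: (i) compute $V'(u)=u^p-u$ and express $V(u)/V'(u)^2$ cleanly; (ii) perform the change of variables so that the function and its derivatives are smooth across $u=1$, checking that the apparent singularity of $V/V'^2$ at $u=1$ is removable (indeed $V$ vanishes to second order and $V'$ to first order there, so the ratio is finite); (iii) differentiate twice and reduce the sign of the second derivative to an inequality among a small number of elementary terms in the new variable; (iv) establish that inequality, ideally by exhibiting that the function in question satisfies a differential inequality forcing convexity, and invoke a maximum-principle or comparison argument to conclude $\ge 0$ globally, treating the two regimes $0<u<1$ and $u>1$ separately if needed.

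The hard part, I expect, will be step (iii)–(iv): controlling the sign of the second derivative uniformly for all real $p>1$ (not merely integer or rational $p$). This is precisely where the earlier literature required delicate computations, and the anticipated payoff of the maximum-principle reformulation is to replace those computations with a monotonicity or convexity comparison that is manifestly valid for all $p$. I would therefore concentrate effort on choosing the substitution so that the $p$-dependence enters in a structurally transparent way — for instance so that the critical inequality reduces to the nonnegativity of a manifestly convex or monotone expression — thereby avoiding the case analysis in $p$ that makes the direct verification in \cite{MR3072678} so laborious.
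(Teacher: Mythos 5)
Your proposal correctly identifies the paper's overall strategy --- verify Chicone's criterion by a change of variables that turns the convexity condition into a sign condition analyzable by maximum principles --- but it stops exactly where the actual work begins. Everything in steps (iii) and (iv), which you yourself flag as ``the hard part,'' is left as an aspiration rather than carried out, and that is where all of the mathematical content of the theorem lives. Concretely, what is missing is: the identification of the explicit function that controls the sign, namely (after shifting the potential so its minimum sits at the origin and substituting $u=e^t-1$) the quantity $C(t)=-\frac{p-1}{p+1}f_p(t)e^{-(p+3)t/2}$ with
$$
f_p(t)=\frac{\sinh(pt)-p\sinh(t)}{4\sinh^3\left(\frac{p-1}{2}t\right)},
$$
and the two differential inequalities $f_p'(t)+\frac{p+3}{2}f_p(t)>0$ and $f_p'(t)-\frac{p+3}{2}f_p(t)<0$ for $t>0$, which together (using that $f_p$ is even) are precisely what make $C$ increasing on both half-lines. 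Proving those inequalities is not a routine comparison: the paper has to set up a cascade of second-order ODEs (for auxiliary functions $h$, $H$, $S$, $T$, each forced by the next), determine the signs of the forcing terms via a function $G(t)$ involving $\tanh$, and run separate case analyses for $1<p<3$, $p=3$, $3<p\le 7$, and $p>7$ --- including a two-critical-point argument when the forcing term changes sign. None of this is predictable from the outline, and the claim that a well-chosen substitution will make the inequality ``manifestly convex or monotone'' and avoid case analysis in $p$ is, as the paper's proof shows, not borne out.

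A secondary point: you propose to verify $\left(V/V'^2\right)''\ge 0$ directly for the unshifted potential, but the version of Chicone's criterion the paper uses requires the potential to be normalized so that its minimum is at the origin with value zero; the paper therefore works with $W(u)=V(u+1)$ before substituting $u=e^t-1$. This is a small normalization issue, easily fixed, but it matters for getting the clean $\sinh$ expression above. As it stands, your submission is a correct description of the method but not a proof; to complete it you would need to supply the explicit form of $f_p$ and establish the two key differential inequalities.
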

The Chicone criterion establishes that the period is an increasing function of the energy if 
\begin{equation}
\left( \frac{V(x)}{V'(x)^2} \right)' 
\end{equation}
is an increasing function of $x$ and provided that the potential is such that $x=0$ is the minimum of the potential and $V(0) =0$. Thus, we  shift the potential, i.e.,  we set $ W(\tilde u) =  V(\tilde u +1)$, and consider 
$$
W(u) = -\frac{1}{2} (u+1)^2 + \frac{ (1+ u)^{p+1}}{p+1} + \frac{p -1}{ 2 (p+1)},
$$
where we have omitted the use of tilde in $u$. We must prove then that
\begin{equation}
C(u) = \left( \frac{W(u)}{W'(u)^2} \right)'  \quad \text{increases with $u$.}
\end{equation}
It is convenient to make the change of variables
$$
u = e^t-1
$$
Since $du/dt>0$, $C(u)$ increasing with $u$ is equivalent to $C(t)$ increasing in $t$. With this change of variables $C(t)$ can be written as
\begin{equation}
C(t) =  - \frac{(p-1)}{(p+1)}   f_p(t)  e^{ - (p+3) t/2}
\end{equation}

where
\begin{equation} \label{eq:fp}
f_p(t) = \frac{ \sinh( p t) - p \sinh(t)}{4 \sinh^3 \left(  \frac{p-1}{2} t \right) } \ .
\end{equation}
We will show that $C(t)$ is a monotone increasing function of $t$.

\bigskip
\noindent
First we prove three elementary lemmas about the function $f_p(x)$.
\begin{lemma} \label{mainlemma}
For any $1<p<\infty$, the function $f_p(t)$ is even and positive, and one has

\bigskip

\noindent
i) $f_3(t) = 1$, all $t \in \mathbb{R}$, 

\bigskip
\noindent
ii) $f_p'(t) < 0$, all $0<t< \infty$, for $p>3$, and

\bigskip
\noindent
iii) $f_p'(t) > 0$, all $0<t< \infty$, for $1< p<3$.

\end{lemma}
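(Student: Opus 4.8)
The evenness is immediate, since both $\sinh(pt)-p\sinh(t)$ and $4\sinh^3(\tfrac{p-1}{2}t)$ are odd in $t$; positivity for $t>0$ follows because $g(t)=\sinh(pt)-p\sinh(t)$ satisfies $g(0)=0$ and $g'(t)=p(\cosh(pt)-\cosh(t))>0$. Part (i) is just the identity $\sinh(3t)=3\sinh(t)+4\sinh^3(t)$. The substance is (ii)--(iii), and the plan is to pin down the sign of $f_p'(t)$ for $t>0$. Writing $f_p=N/D$ with $N=\sinh(pt)-p\sinh(t)$ and $D=4\sinh^3(at)$, $a=\tfrac{p-1}{2}$, a short computation gives $f_p'=h(t)/\bigl(4\sinh^4(at)\bigr)$, so that the sign of $f_p'(t)$ equals the sign of
\[
h(t)=p\bigl(\cosh(pt)-\cosh(t)\bigr)\sinh(at)-3a\bigl(\sinh(pt)-p\sinh(t)\bigr)\cosh(at).
\]

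Next I would apply the product-to-sum formulas together with the identities $p+a=\tfrac{3p-1}{2}$, $p-a=1+a=\tfrac{p+1}{2}$, $1-a=\tfrac{3-p}{2}$ to rewrite $4h$ as a linear combination of $\sinh(\tfrac{3p-1}{2}t)$, $\sinh(\tfrac{p+1}{2}t)$ and $\sinh(\tfrac{3-p}{2}t)$. Expanding each in its Taylor series, the coefficient of $t^{2k+1}$ in $4h(t)$ equals $d_k/(2k+1)!$ with $d_k=\dfrac{(3p-1)(3-p)}{2^{2k+1}}\,G_k(p)$, where
\[
G_k(p)=(3p-1)^{2k}+p(3-p)^{2k}-(p+1)^{2k+1}.
\]
The decisive observation is that the prefactor of $G_k$ carries the sign of $(3-p)$, so \emph{both} cases collapse to the single inequality $G_k(p)\ge 0$: if this holds, then for $1<p<3$ every $d_k\ge 0$, whence $h>0$ and $f_p'>0$, while for $p>3$ every $d_k\le 0$, whence $h<0$ and $f_p'<0$.

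It therefore remains to prove $G_k(p)\ge 0$ for $p>1$, and this is the heart of the matter; a crude convexity estimate on $(p+1)^{2k}$ is too weak to absorb the extra factor $(p+1)$ in $(p+1)^{2k+1}$. The trick I would use is the rational change of variable $r=\tfrac{2(p-1)}{p+1}\in(0,2)$, under which $4G_k=(p+1)^{2k+1}\bigl(\Psi_k(r)-4\bigr)$ with
\[
\Psi_k(r)=(2-r)(1+r)^{2k}+(2+r)(1-r)^{2k},
\]
so that $G_k(p)\ge 0$ becomes $\Psi_k(r)\ge 4$ on $(0,2)$. Here $\Psi_0\equiv\Psi_1\equiv 4$, and the telescoping identity
\[
\Psi_{k+1}(r)-\Psi_k(r)=r(2-r)(2+r)\bigl[(1+r)^{2k}-(1-r)^{2k}\bigr]
\]
shows each increment is strictly positive on $(0,2)$ for $k\ge 1$, since there $1+r>|1-r|$. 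Summing the increments gives $\Psi_k(r)>\Psi_1(r)=4$ for $k\ge 2$, hence $G_k(p)>0$, which completes (ii) and (iii). The main obstacle is exactly the inequality $G_k\ge 0$; everything hinges on recognizing that after the substitution $r=\tfrac{2(p-1)}{p+1}$ the quantity $\Psi_k$ is increasing in $k$ off a constant base value, so the positivity follows by a one-line telescoping rather than by any delicate term-by-term estimate.
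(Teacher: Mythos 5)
Your proof is correct, and after the common first step it takes a genuinely different route from the paper's. Both arguments begin identically: differentiate the quotient, clear the positive factor $4\sinh^4(\tfrac{p-1}{2}t)$, and use product-to-sum identities to reduce the sign of $f_p'$ to the sign of a fixed linear combination of $\sinh(\tfrac{3p-1}{2}t)$, $\sinh(\tfrac{p+1}{2}t)$, $\sinh(\tfrac{3-p}{2}t)$ (your $h$ is the negative of the paper's $h$ in (\ref{eq:f6}), with coefficients $\tfrac{3-p}{4}$, $-\tfrac{(3p-1)(3-p)}{4}$, $\tfrac{p(3p-1)}{4}$ respectively --- these match). From there the paper shows $h$ satisfies the second-order equation (\ref{eq:f7}) whose right side has a definite sign, and concludes by a maximum-principle contradiction seeded by the $t^5$ behaviour near the origin; you instead prove that \emph{every} Taylor coefficient of $h$ carries the sign of $3-p$, reducing both cases to the single inequality $G_k(p)\ge 0$, which you settle by the substitution $r=\tfrac{2(p-1)}{p+1}$ and the telescoping identity for $\Psi_k$. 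I checked the details: $G_0=G_1=0$ (consistent with $h=O(t^5)$), the identity $\Psi_{k+1}-\Psi_k=r(2-r)(2+r)[(1+r)^{2k}-(1-r)^{2k}]$ is correct, and at $p=2$ your $d_2$ reproduces the paper's leading coefficient in (\ref{eq:f8}). Your approach is arguably more elementary (no differential inequality, and the two cases $1<p<3$ and $p>3$ are handled at one stroke by the sign of the prefactor $(3p-1)(3-p)$), and it yields the slightly stronger statement that the full Taylor series of $h$ is single-signed. What the paper's ODE-plus-maximum-principle method buys is uniformity: the identical template is reused for Lemmas \ref{positive} and \ref{negative}, where the relevant combinations involve both $\sinh$ and $\cosh$ terms and a term-by-term positivity of the Taylor expansion would be harder to extract.
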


\begin{proof}
The fact that $f_p(t)$ is even and positive for $p>1$ is obvious (the fact that $\sinh(p \,t) - p \sinh(t)$ for all $p>0$ and $t>0$ follows immediately from the monotonicity 
of the function $\sinh{t}/{t}$ for $y>0$. Using that $\sinh(t) = (e^t - e^{-t})/2$, i)  follows at once. 
To prove ii) and iii), notice first that 
\begin{equation}
f_p(t) = \frac{1}{3} \frac{ p(p+1)}{(p-1)^2} \left[ 1 - \frac{t^2}{40} (3 p -1) (p-3) + O(t^4) \right] \qquad \textrm{near} \qquad t=0,
\label{eq:f1}
\end{equation}
and 
\begin{equation}
f_p(t) \approx {\rm e}^{ -\frac{1}{2} (p-3) t} , 
\label{eq:f2}
\end{equation}
near infinity. We first prove ii), i.e., the case $p>3$: 

\bigskip
\noindent
Rewriting (\ref{eq:fp}) as
\begin{equation}
4 \sinh^3 \left(  \frac{p-1}{2}\,  t \right)  f_p(t) =  \sinh( p \, t) - p \sinh(t) 
\label{eq:f3}
\end{equation}
taking its derivative and multiplying by $\sinh((p-1)\, t/2)$ we obtain,  after using (\ref{eq:fp}) to eliminate $f_p$, 
\begin{eqnarray}
4 f'_p(t) \sinh^4 \left(  \frac{p-1}{2} t \right) + & \frac{3}{2} (p-1) \cosh \left(  \frac{p-1}{2} t \right) (\sinh( p \,t) - p \, \sinh t)  \nonumber \\
&= p (\cosh( p \, t) - \cosh t) \sinh \left(  \frac{p-1}{2} t \right).
\label{eq:f4}
\end{eqnarray}
Therefore, to prove $f'_p <0$ for $t>0$ and $p>3$  we need to show that
\begin{equation}
\frac{3}{2}(p-1) \cosh \left( \frac{p-1}{2} t \right) ( \sinh p\, t - p \,  \sinh t) >  p \sinh \left( \frac{p-1}{2} t \right) (\cosh( p \, t) - \cosh t).
\label{eq:f5}
\end{equation}
provided $p>3$ and $t>0$. 
Using the identity
$$
\cosh (\alpha t) \sinh (\beta t) = \frac{1}{2}  \sinh[(\alpha + \beta) t] +  \frac{1}{2}  \sinh[( \beta - \alpha ) t] 
$$
equation (\ref{eq:f5}) is equivalent to proving that 
\begin{equation}
h(t) \equiv  \frac{(p-3)}{4} \sinh \left( \Omega_3 \, t \right) +  \frac{3 p^2-p}{4} \sinh \left( \Omega_1 \,  t \right) -  \frac{3 p^2-10 p +3}{4}\sinh \left(\Omega_2 \, t \right)  >0,
\label{eq:f6}
\end{equation}
where we have conveniently defined the three ``frequencies''
\begin{equation}
\Omega_1(p) = \frac{p-3}{2},\quad  \Omega_2(p) = \frac{p+1}{2}, \quad \Omega_3(p) = \frac{3p-1}{2}.
\label{eq:freq}
\end{equation}
By direct calculation, one can show that $h(t)$ satisfies the second order differential equation
\begin{equation}
h''(t) - \Omega_3^2 \,  h(t) = p (p-1)(3p-1) \left[  (p-3)  \sinh \left( \frac{p +1}{2} t \right) - (p+1) \sinh \left( \frac{ p -3}{2} t \right) \right].
\label{eq:f7}
\end{equation}
Notice that for $p>3$ and $t>0$ the right side of (\ref{eq:f7}) is positive.
In fact, defining  $r= p+1, s= p-3$, the right side is positive since
$$
\frac{\sinh ( r \, t)}{r} > \frac{\sinh ( s \, t)}{s} \qquad \textrm{if}\qquad r>s,
$$
which follows immediately from the montonicity of $\sinh{y}/y$ for $y>0$. 

\bigskip
\noindent
To prove that $h(t)$ is  a positive  function (even more, it is an increasing function) of $t \in (0, \infty)$, we first notice that
\begin{equation}
h(t) = \frac{1}{240}  (p-3)(3p-1)(p-1)^2 p (p+1)t^5 + O(t^6),
\label{eq:f8}
\end{equation}
that is, for $p>3$, and $t>0$, $h(t)$ is positive in a neighborhood of zero (for $t \neq 0$). 
Since $h$ is a continuous function of $t$, if $h$ were to become negative it should reach a local maximum at some point $t_1>0$, say. But at $t_1$, one has at the same time 
$h''(t_1) < 0$, and $h(t_1) >0$. Thus, at $t_1$, the left side of (\ref{eq:f7}) is strictly negative whereas the right side is strictly postive, which is a contradiction. This in fact shows that $h(t)$ is not only positive but also it is strictly increasing in $t$. Usng (\ref{eq:f4}),  (\ref{eq:f5}), and  (\ref{eq:f6}), this in turn  implies ii). 

\bigskip
\noindent
Finally, in order to prove iii) we use the same argument. This time, however, since $1<p<3$ it follows that the right side of (\ref{eq:f4}) is strictly negative for $t \in (0,\infty)$. 
Also this time, it follows from (\ref{eq:f8}) that $h(t)$  is negative  in a neighborhood of zero (for $t \neq 0$). Now we have  from (\ref{eq:f7}) that $h(t)$ is negative (in fact 
strictly decreasing) for $t>0$, for if this were not the case we would have a point $t_2$ which would be a negative local minimum of $h$. At that point  we would have 
$h''(t_2) > 0$ and $h(t_2) <0$. 
Hence, at $t_2$, the left side of (\ref{eq:f7}) would be strictly positive and at the same time the right side would be strictly negative, which is again a contradiction. 
This concludes the proof of iii) and the proof of the lemma. 

\end{proof}

\begin{lemma} \label{positive}
For any $1<p<\infty$, 
\bigskip

\begin{equation}
f_p'(t) + \frac{(p+3)}{2} f_p(t)  >0,
\label{eq:lemma2res}
\end{equation}
all $t >0 $.
\end{lemma}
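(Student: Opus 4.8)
The plan is to split on the sign of $p-3$, reducing everything to a single hyperbolic inequality that I would attack with the maximum-principle scheme already used in Lemma \ref{mainlemma}. For $1<p\le 3$ there is nothing to do: by parts (i) and (iii) of Lemma \ref{mainlemma} one has $f_p'(t)\ge 0$ and $f_p(t)>0$ on $(0,\infty)$, and $(p+3)/2>0$, so the left side of \eqref{eq:lemma2res} is a sum of non-negative terms, strictly positive because $f_p>0$. The entire content of the lemma is therefore the case $p>3$, where $f_p'<0$ and a genuine lower bound on $f_p'$ is needed.

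For $p>3$ I would multiply \eqref{eq:lemma2res} by $4\sinh^4(\tfrac{p-1}{2}t)>0$ and reuse the identities from the proof of Lemma \ref{mainlemma}: equation \eqref{eq:f4} combined with \eqref{eq:fp} gives $4f_p'(t)\sinh^4(\tfrac{p-1}{2}t)=-h(t)$, with $h$ the function in \eqref{eq:f6}, while \eqref{eq:fp} gives $4f_p(t)\sinh^4(\tfrac{p-1}{2}t)=(\sinh(pt)-p\sinh t)\sinh(\tfrac{p-1}{2}t)$. Thus the lemma is equivalent to
\[ H(t):=\tfrac{p+3}{2}\,(\sinh(pt)-p\sinh t)\,\sinh\!\big(\tfrac{p-1}{2}t\big)-h(t)>0,\qquad t>0. \]
Expanding the products with the same addition formulas, $H$ becomes a combination of $\sinh(\Omega_j t)$ and $\cosh(\Omega_j t)$ at the three frequencies \eqref{eq:freq}. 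From \eqref{eq:f1} one has $f_p'(t)+\tfrac{p+3}{2}f_p(t)\to\tfrac{p+3}{2}f_p(0)>0$ as $t\to 0$, so $H$ vanishes to fourth order at the origin with a positive coefficient; and from \eqref{eq:f2} the top frequency $\Omega_3=(3p-1)/2$ survives in $H$ with a positive coefficient, so $H(t)\to+\infty$. I would then try to run the contradiction argument of Lemma \ref{mainlemma}: a first interior zero of $H$ is a non-positive local minimum $t_0$ with $H''(t_0)\ge 0$, and one hopes the second-order equation $H''-\Omega_3^2 H=R$ forces $R(t_0)<0$.

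The hard part is precisely the sign of the remainder $R$. Unlike the forcing term in \eqref{eq:f7}, which is a single-signed combination of two hyperbolic sines controlled by the monotonicity of $\sinh y/y$, here $R$ mixes $\sinh$ and $\cosh$ at the lower frequencies $\Omega_1,\Omega_2$, and a short computation shows its leading coefficient is proportional to $p(p-7)$, hence not of one sign on $(3,\infty)$; the naive single-ODE argument therefore does not close. To get around this I would exploit the large margin in \eqref{eq:lemma2res}: writing $f_p'+\tfrac{p+3}{2}f_p=(f_p'+\tfrac{p-3}{2}f_p)+3f_p$ and using $f_p>0$, it suffices to prove the sharper inequality $f_p'(t)+\tfrac{p-3}{2}f_p(t)\ge 0$, that is, that $f_p(t)e^{(p-3)t/2}$ is non-decreasing. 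By \eqref{eq:f2} this function tends to $1$ at infinity, which cancels the top frequency and leaves a cleaner inequality at $\Omega_1,\Omega_2$ that I expect, as throughout Lemma \ref{mainlemma}, to reduce to the monotonicity of $\sinh y/y$. I anticipate this sharper statement to be the true obstacle: it is tight, with equality in the limit $t\to\infty$, so any proof must track the cancellation between $(\log f_p)'$ and its limit $-\tfrac{p-3}{2}$ rather than merely estimate sizes.
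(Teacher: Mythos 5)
Your reduction to the case $p>3$ is correct (and is the paper's own Remark ii) following the lemma), and your setup --- multiplying by $\sinh^4\bigl(\tfrac{p-1}{2}t\bigr)$, checking positivity near $0$ and at infinity, and trying to run the maximum principle on $H''-\Omega_3^2H=R$ --- is exactly the paper's. You have also correctly located the obstruction: the forcing term is not single-signed, its behaviour at infinity being governed by $p(p-7)$ (this is \eqref{eq:f15}). But at that point your argument stops, and this is precisely where the paper's real work lies. The paper iterates the scheme one level further: the forcing term $S$ itself satisfies $S''-\Omega_1^2S=\tfrac14\cosh(\Omega_2 t)G(t)$ with $G(t)=(p+3)(p+1)-(3p-1)(p-3)\tanh(\Omega_2 t)$, a function of a single $\tanh$ whose sign pattern can be read off by hand: $G>0$ throughout for $1<p\le 7$, and exactly one sign change for $p>7$. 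From this one deduces that $S$ is positive on all of $(0,\infty)$ for $p\le 7$ and changes sign exactly once for $p>7$; in the latter case the maximum principle is applied in a refined form, namely that a negative excursion of $H$ would force a positive local maximum at some $x_d$ and a negative local minimum at some $x_e>x_d$, which the ODE places on the wrong sides of the zero of $S$. None of this second layer is present in your proposal.

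Your proposed escape --- proving the sharper inequality $f_p'(t)+\tfrac{p-3}{2}f_p(t)\ge 0$ for $p>3$ and adding $3f_p>0$ --- is a legitimate sufficient condition and appears numerically plausible, but you do not prove it, and you yourself flag it as ``the true obstacle.'' Moreover, as you observe, it is tight: $f_p(t)e^{(p-3)t/2}\to 1$, so the top frequency $\Omega_3$ cancels from the corresponding auxiliary function, the exponential asymptotics degenerate, and you would again confront a lower-frequency forcing term whose sign you have not determined --- the strict-sign contradiction arguments used throughout the paper do not apply off the shelf to an inequality with equality in the limit. So the case $p>3$, which you correctly identify as the entire content of the lemma, remains unproven; what you have is a correct diagnosis of the difficulty together with two candidate strategies, one of which (the iterated ODE) is the paper's and is left unexecuted, and the other of which is deferred to an unproved and delicate auxiliary claim.
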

\begin{remarks} 

\noindent
i) In fact one can prove that, 
for $t>0$, $f_p'(t) + \frac{(p+3)}{2} f_p(t)$ is constant (equal to $(p+3)/2$)  for $p=3$, is strictly decreasing for $p>3$, and is strictly increasing for $1<p<3$. Since 
we do not need these facts in the sequel, we omit their proof. 

\noindent
ii) In view of the previous lemma, (\ref{eq:lemma2res}) is certainly true for $1<p \le 3$. 

\end{remarks}

\begin{proof} Proceeding as in the proof of the previous lemma, it is convenient to introduce 
\begin{equation}
H(t) = \sinh^4 \left(\frac{(p-1)t}{2}\right) \left[f_p'(t)  + \frac{(p+3)}{2} f_p(t) \right].
\label{eq:f9}
\end{equation}
We need to prove that $H(t) >0$ for $t\in (0, \infty)$.  
From (\ref{eq:f9}) and the definition of $f_p(t)$ one can verify that
\begin{equation}
H(t) =  \frac{1}{96} p (p-1)^2 (p+1) (p+3) t^4  + O(t^5)
\label{eq:f10}
\end{equation}
and therefore $H(t)>0$ in a neighborhood of zero (for $t \neq 0$).  
Moreover, 
\begin{equation}
H(t) \approx  \frac{3}{16} \exp(\Omega_3 \, t) >0, 
\label{eq:f11}
\end{equation}
as $t \to \infty$. 
It is straightforward to show that $H(t)$ satisfies, 
\begin{equation}
H''(t) - \Omega_3^2 \, H(t) = p (p-1) S(t),
\label{eq:f12}
\end{equation}
with
\begin{align} \label{eq:f13} 
S(t) \equiv &\frac{1}{8}[(p+3)(p+1) \left(  \cosh(\Omega_2 \, t) -  \cosh(\Omega_1 \,  t)\right) 
 \\ 
 &-(3p-1)(p-3)\sinh(\Omega_2 \,  t) + (3 p-1) (p+1) \sinh(\Omega_1 \,  t)] \ . \nonumber
\end{align}
Expanding $S(t)$ around $t=0$ we have that
\begin{equation}
S(t) = \frac{ (p+1)(p+3)(p-1)}{8} t^2 + O(t^3),
\label{eq:f14}
\end{equation}
in a neighborhood of $0$. Moreover, 
\begin{equation}
S(t) \approx  -\frac{p(p-7)}{8} e^{\Omega_2  \, t},
\label{eq:f15}
\end{equation}
near infinity, for $p>1$ and $p \neq 7$. A simple computation shows that $S(t) \approx (5/12) e^{2t}$ near infinity when $p=7$. 
Now, it follows from (\ref{eq:f13}) that $S(t)$ satisfies the following ordinary differential equation, 
\begin{equation}
S''(t) - \Omega_1^2  \, S(t) = \frac{1}{4} \cosh\left(\Omega_2 \, t \right) G(t), 
\label{eq:f16}
\end{equation}
with
\begin{equation}
G(t) = (p+3) (p+1) - (3p-1) (p-3) \tanh (\Omega_2  \, t )
\label{eq:f17}
\end{equation}

\bigskip
\noindent
Since $\tanh(y)$ is an increasing function of $y$, $G(0) =(p+3)(p+1)$, and $G(\infty) = - 2 p(p-7)$, 
it follows at once from (\ref{eq:f17}) that 

\bigskip
\noindent
i) $G(t)$ is a positive and increasing function of $t$ for $t \in (0,\infty)$, when $1<p<3$. 

\bigskip
\noindent
ii) $G(t)=G(0)= 24$, for all $t$, for $p=3$, 

\bigskip
\noindent
iii) $G(t)$ is a positive and decreasing function of $t$ for $t \in (0,\infty)$, when $3<p\le 7$, and 

\bigskip
\noindent
iv) $G(t)$ is a decreasing function of $t$ for $t \in (0,\infty)$, and has only one zero $t_3$ when $p>7$ given by  $\tanh(\Omega_2 \,  t_3) = (p+3)(p+1)/[(3p-1)(p-3)]$. 

\bigskip
\noindent
Using these properties of $G(t)$ and the equation (\ref{eq:f16}),  it is simple to prove the following properties of $S(t)$:

\bigskip
\noindent
S1)  For $1<p \le 7$, and $t \in (0,\infty)$, $S(t)$  is positive. In fact, it is strictly increasing. 

\bigskip
\noindent
S2) For $7<p$, and $t \in (0,\infty)$, $S(t)$  has only one zero, say $\hat t$, so that $S(t)>0$ in $(0,\hat t)$, whereas $S(t) < 0$ (and strictly decreasing) for $t>\hat t$.  

\bigskip
\noindent
It follows from (\ref{eq:f14}) that $S(t)$ is positive in a neighborhood of $0$. Since $S$ is a continuous function, if $S(t)$ would be negative at some point, there 
has to be a local maximum of $S(t)$, say $t_m$. At this point one would have $S''(t_m) <0$, and $S(t_m)>0$. But this yields a contradiction with (\ref{eq:f16}) since 
$G(t)>0$ for all $t \in (0, \infty)$ when $1<p \le 7$. This proves S1). 

\bigskip
\noindent
On the other hand, if $p>7$, it follows from (\ref{eq:f15}) that $S(x) >0$ for large values of $x$, and we recall from (\ref{eq:f14}) that $S(x)>0$ in a neighborhood of zero. 
If $S(x)$ were not positive for all $x \in (0,\infty)$ there should exist two points $0<x_a<x_b<\infty$, such that $x_a$ is a local positive maximum of $S$ and $x_b$ a negative local minimum
of $S$. Using (\ref{eq:f16}) this in turn implies $G(x_a) <0$, and $G(x_b)>0$. From here it finally follows that $x_a > x_3$ and $x_b <  x_3$, which is a contradiction with the fact that
$0<x_a<x_b<\infty$. This proves S2). 

Using the properties S1) and S2) of $S(x)$ we can now conclude the proof of the lemma, i.e., that $H(x)>0$. We proceed exactly as above. If $1<p \le 7$, $S(x)>0$ in $(0,\infty)$. From (\ref{eq:f10}) and (\ref{eq:f11}) respectively, $H(x)$ is positive near zero and at infinity. Since $H$ is continuous, if $H$ were to become negative somewhere in $(0,\infty)$ there would exist a positive local maximum of $H$, say $x_c$. This would imply that the left side of (\ref{eq:f12}) at $x_c$  would be negative. But we know that for all $x \in (0,\infty)$ the 
right side of (\ref{eq:f12}) is positive, which is a contradiction. On the other hand, for $p>7$, $S(x)$ has a unique zero, $\hat x$, in $(0,\infty)$. If in this case $H$ would be negative somewhere in $(0,\infty)$, there should be two points, $0 <x_d < x_e < \infty$ say, with $x_d$ a positive local maximum of $H$ and $x_e$ a negative local minimum of $H(x)$. 
From (\ref{eq:f12}) and the property S2) it follows that $x_d > \hat x$ and $x_e < \hat x$ which is a contradiction with the fact that  $0 <x_d < x_e < \infty$. This concludes the proof of the lemma. 
\end{proof}

\begin{lemma} \label{negative}
For any $1<p<\infty$, 
\bigskip

\begin{equation}
f_p'(t) -  \frac{(p+3)}{2} f_p(t)  < 0,
\label{eq:lemma3res}
\end{equation}
all $t >0$.
\end{lemma}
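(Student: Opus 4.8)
The plan is to mirror, with a handful of sign changes, the argument used for Lemma~\ref{positive}. First I would dispose of the range $p\ge 3$, which is immediate: by Lemma~\ref{mainlemma} one has $f_p'(t)\le 0$ for $t>0$ when $p\ge 3$ (with $f_3'\equiv 0$), while $f_p(t)>0$ and $(p+3)/2>0$, so $f_p'(t)-\tfrac{p+3}{2}f_p(t)<0$ at once. Thus the only genuine content of (\ref{eq:lemma3res}) lies in the range $1<p<3$, where Lemma~\ref{mainlemma}~(iii) gives $f_p'>0$ and no elementary bound is available; this is the opposite of the situation in Lemma~\ref{positive}, whose easy range was $1<p\le 3$.

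For $1<p<3$ I set
\[
\hat H(t)=\sinh^4\!\Big(\tfrac{(p-1)t}{2}\Big)\Big[f_p'(t)-\tfrac{p+3}{2}f_p(t)\Big],
\]
the exact analogue of $H$ in (\ref{eq:f9}) with the sign of the second term reversed, and I aim to prove $\hat H<0$ on $(0,\infty)$. Using (\ref{eq:f3}) to eliminate $f_p$ and $f_p'$, a direct computation of the same type as the one producing (\ref{eq:f12}) shows that $\hat H$ solves
\[
\hat H''(t)-\Omega_3^2\,\hat H(t)=p(p-1)\,\hat S(t),
\]
where $\hat S$ differs from $S$ in (\ref{eq:f13}) only in the sign of the $(\cosh(\Omega_2 t)-\cosh(\Omega_1 t))$ term. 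The boundary data are favourable: the expansions (\ref{eq:f1})--(\ref{eq:f2}) give $\hat H(0)=0$ with $\hat H(t)<0$ for small $t>0$ and $\hat H(t)\sim -\tfrac{p}{16}e^{\Omega_3 t}\to-\infty$. Exactly as in Lemma~\ref{positive} it therefore suffices to show that the right-hand side has the correct sign, namely $\hat S\le 0$ on $(0,\infty)$; then $K=-\hat H$ satisfies $K(0)=0$, $K>0$ near $0^+$, $K\to+\infty$ and $K''-\Omega_3^2 K\ge 0$, and were $K$ to become negative it would first attain a positive local maximum, where the left-hand side would be negative, contradicting $K''-\Omega_3^2 K\ge 0$. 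This forces $\hat H<0$, and since the strict inequality in the differential equation rules out interior zeros (a positive bump between two zeros would again produce a forbidden interior maximum), one obtains the strict conclusion.

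It remains to establish $\hat S<0$, and this is the step requiring care. Differentiating $\hat S$ twice yields the analogue of (\ref{eq:f16}),
\[
\hat S''(t)-\Omega_1^2\,\hat S(t)=-\tfrac{p-1}{4}\cosh(\Omega_2 t)\,\hat G(t),\qquad \hat G(t)=(p+3)(p+1)+(3p-1)(p-3)\tanh(\Omega_2 t),
\]
so that $\hat G$ is the analogue of $G$ in (\ref{eq:f17}) with the sign of the $\tanh$-term reversed. For $1<p<3$ the coefficient $(3p-1)(p-3)$ is negative, so $\hat G$ decreases monotonically from $\hat G(0)=(p+3)(p+1)$ to $\hat G(\infty)=4p^2-6p+6$; both endpoints are positive (the quadratic $4p^2-6p+6$ has negative discriminant), whence $\hat G>0$ throughout and the right-hand side above is strictly negative. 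Since $\hat S(0)=0$, $\hat S(t)<0$ for small $t>0$ (by the analogue of (\ref{eq:f14})) and $\hat S\to-\infty$, the same maximum-principle argument applied to $-\hat S$, now with $\Omega_1^2>0$ since $p\ne 3$, yields $\hat S<0$ on $(0,\infty)$; feeding this back into the equation for $\hat H$ completes the proof.

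The main obstacle is organisational rather than conceptual: one must carry out the two ``direct computations'' producing the differential equations for $\hat H$ and for $\hat S$, and verify the indicated small-$t$ and large-$t$ asymptotics. Once these are in place, the positivity of $\hat G$ — and hence the entire chain of maximum-principle arguments — follows from the monotonicity of $\tanh$ exactly as in the previous two lemmas, so the only real risk is bookkeeping errors in the hyperbolic identities.
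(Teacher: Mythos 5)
Your argument is correct and is essentially the paper's own proof: the same reduction to $1<p<3$ via Lemma \ref{mainlemma}, the same auxiliary function (your $\hat H$ is the paper's $J$ in \eqref{eq:g1}), the same two-level chain of differential equations (your $\hat S$ and $\hat G$ are the paper's $T$ in \eqref{eq:g5} and the bracket in \eqref{eq:g6}), and the same maximum-principle argument at each level. The only differences are notational and in presentation (you describe $\hat S$, $\hat G$ as sign-flips of $S$, $G$ rather than writing them out), so there is nothing substantive to add.
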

\begin{remarks} 
Because of Lemma \ref{mainlemma},  (\ref{eq:lemma3res}) is true for $3 \le p < \infty$, thus we need only prove it for $1 < p < 3$. 
\end{remarks}

\begin{proof} 
The proof is analogous to the proof of the previous lemma. We begin by introducing
\begin{equation} 
J(t) = \sinh^4\left( \frac{p-1}{2}t \right) \left( f_p'(t) - \frac{p+3}{2} f_p(t) \right).
\label{eq:g1}
\end{equation}
and aim to prove that $J(t)<0$, for all $0<t <\infty$ and for all $1<p<3$. 
Expanding $J(t)$ around $t=0$ we find, 
\begin{equation}
J(t) =  -\frac{1}{96} p(p+1)(p+3)(p-1)^2 t^4 + \mathcal{O} (t^5) 
\label{eq:g2}
\end{equation}
hence, $J(t)$ is negative in a neighborhood of $0$. 
It follows from (\ref{eq:g1}) that  $J(t)$ satisfies the differential equation
\begin{equation}
J''(t) - \Omega_3^2 J(t) = p(p-1) T(t),
\label{eq:g4}
\end{equation}
where
\begin{align}\label{eq:g5}
 T(t) = &\frac{(p+1)(p+3)}{8} \left(\cosh (\Omega_1 \, t)  - \cosh( \Omega_2 \, t)\right)   \\
& +\frac{(3 p-1)(p+1)}{8} \sinh (\Omega_1 \, t)-\frac{(p-3)(3 p-1)}{8} \sinh( \Omega_2 \, t)\ .
\nonumber
\end{align}
It follows immediately from (\ref{eq:g5}) that 
\begin{equation}
T''(t) - \Omega_1^2 T(t) = - \frac{1}{4} (p-1) \cosh \Omega_2 t \left[ (p+1) (3 + p) + (p-3) (3p-1) \tanh (\Omega_2 \, t)  \, \right].
\label{eq:g6}
\end{equation}
Since $1<p<3$, the function  $(p+1) (3 + p) + (p-3) (3p-1) \tanh (\Omega_2  \, t)$ is decreasing in $t$, so it is everyhere bigger than 
its value at infinity, i.e., than $4 p^2 +6 (1-p) > 4$. Hence, the right side of (\ref{eq:g6}) is negative for all $t>0$ and $1<p<3$. 
From (\ref{eq:g2}) and (\ref{eq:g4}) we have that
\begin{equation}
T(t) =  -\frac{1}{8} (p+3)(p^2-1)  t^2 + \mathcal{O} (t^3),
\label{eq:g7}
\end{equation}
i.e., $T(t)$ is negative in a neighborhood of $0$. Since the right side of (\ref{eq:g6}) is strictly negative, it then follows  that $T(t) <0$ 
(and, in fact decreasing) for all $t>0$ and $1<p<3$. For, if this were not the case there would be a local negative minimum, at $s$ say, where 
$T''(s)>0$ and $T(s)<0$. Thus, at $s$ the left side of (\ref{eq:g6})  would be positive, which is a contradiction since we know the right side of that equation is negative for all $t>0$. 
From here, using this time (\ref{eq:g4}) and 
(\ref{eq:g5}) and exactly the same argument we conclude that $J(t) <0$ (and, in fact strictly decreasing) for all $t>0$ and $1<p<3$. This concludes the proof of the lemma. 

\end{proof}
\begin{proof}[Proof of the Theorem]
First notice that  for the exactly solvable case $p=3$, $C(t) = -(1/2) e^{- 3 t}$ increases with $t$, so for $p=3$ the period increases with the energy. For $p \not= 3$ distinguish two cases, $t >0$ and $t<0$.  For $t>0$ the theorem follows at once from from Lemma \ref{negative} by differentiation. For $t<0$, the theorem follows from the fact that $f_p$ is even and Lemma \ref{positive}.

\section{Acknowledgments}
\thanks{The work of R.B. and M.C.D. has been supported by Fondecyt (Chile) Projects \# 112--0836 and \#114-1155 and by the Nucleo Milenio en  ``F\ii sica Matem\'atica'', RC--12--0002 (ICM, Chile).
M.L. would like to thank the Pontificia Universidad Cat\' olica de Chile for their hospitality.
The work of M.L. has been supported in part by Fondecyt (\#114-1155), by the US National Science Foundation Grant DMS-1301555 and the Humboldt Foundation.}

\end{proof}

%\bibliographystyle{plain}
%\bibliography{refsbt}

\end{document}